\theoremstyle{plain}
\newtheorem{prop}{Proposition}[section]
\newtheorem{lem}[prop]{Lemma}
\newtheorem{thm}[prop]{Theorem}
\theoremstyle{definition}
\theoremstyle{remark}
\newcommand\N{\mathbb{N}}
\newcommand\Q{\mathbb{Q}}
\newcommand\R{\mathbb{R}}
\newcommand\Z{\mathbb{Z}}
\newcommand\C{\mathbb{C}}
\newcommand{\mc}[1]{\mathcal #1}
\DeclareMathOperator{\dist}{dist}
\begin{document}
\title[Bernstein-Walsh inequalities in higher dimensions ]{Bernstein-Walsh inequalities in higher dimensions over exponential curves}
\author{Shirali\@ Kadyrov}
\author{Mark\@ Lawrence}

\address[ML, SK]{Department of Mathematics,
Nazarbayev University,
Astana, Kazakhstan.}
\email[ML]{mlawrence@nu.edu.kz}
\email[SK]{shirali.kadyrov@nu.edu.kz}

\keywords{Bernstein-Walsh inequalities, Liouville vectors, Hausdorff dimension}
\subjclass[2010]{Primary: 30D15, Secondary: 11J13, 41A17}

\begin{abstract}
Let ${{\bf x}}=(x_1,\dots,x_d) \in [-1,1]^d$ be linearly independent over $\Z$, set $K=\{(e^{z},e^{x_1 z},e^{x_2 z}\dots,e^{x_d z}): |z| \le 1\}.$
We prove sharp estimates for the growth of a polynomial of degree $n$, in terms of 
$$E_n({\bf x}):=\sup\{\|P\|_{\Delta^{d+1}}:P \in \mc P_n(d+1), \|P\|_K \le 1\},$$
where $\Delta^{d+1}$ is the unit polydisk. For all ${{\bf x}} \in [-1,1]^d$ with linearly independent entries, we have the lower estimate 
$$\log E_n({\bf x})\ge \frac{n^{d+1}}{(d-1)!(d+1)} \log n - O(n^{d+1});$$ for Diophantine $\bf x$, we have $$\log E_n({\bf x})\le \frac{ n^{d+1}}{(d-1)!(d+1)}\log n+O( n^{d+1}).$$ In particular, this estimate holds for almost all $\bf x$ with respect to Lebesgue measure. The results here generalize those of \cite{CP10} for $d=1$, without relying on estimates for best approximants of rational numbers which do not hold in the vector-valued setting. 
\end{abstract}

\maketitle
\section{Introduction}

For any $\ell \in \N$ we let $\Delta^\ell$ denote the unit polydisk
$$\{{\bf z}=(z_1,z_2,\dots,z_\ell) \in \C^\ell : |z_i| \le 1, \forall i=1,2,\dots,\ell\}.$$ 
For a given $d \in \N$ we consider a vector ${{\bf x}}=(x_1,\dots,x_d) \in [-1,1]^d$ and a compact set
$$K=K({\bf x})=\{(e^{z},e^{x_1 z},e^{x_2 z}\dots,e^{x_d z}): |z| \le 1\}.$$
For any $n,\ell \in \N$ we let $\mc P_n(\ell)$ denote the subspace of polynomials $P \in \C[z_1,\dots,z_{\ell}]$ of degree $n$. For any subset $D \subset \C^{\ell}$ and polynomial $P$ we define $\|P\|_D=\{|P({\bf z})| : {\bf z} \in D\}.$ We note that $\|\cdot \|_K$ defines a norm only if $\{1,x_1,x_2,\dots,x_d\}$ are linearly independent over $\Z$ which is what we will assume throughout the paper. For any $n \in \N$ we let 
$$E_n({\bf x}):=\sup\{\|P\|_{\Delta^{d+1}}:P \in \mc P_n(d+1), \|P\|_K \le 1\}.$$
From the equivalence of the norms $\|\cdot\|_{\Delta^{d+1}}$ and $\|\cdot\|_K$ we see (c.f. \cite{CP03}) for any ${\bf z}=(z_0,z_1,\dots,z_d) \in \C^{d+1}$ that
\begin{equation}\label{eqn:BW}
|P({\bf z})| \le \|P\|_K E_n({\bf x})\exp(n\log^+\max\{|z_0|,\dots,|z_d|\}).
\end{equation}
Let $e_n({\bf x})=\log E_n({\bf x}).$ On $\R^d,$ we fix the maximum norm $\| \cdot\|$ given by $\|{\bf x}\|=\max_{1\le \ell \le  d} |x_\ell|.$ For any $x \in \R$ we let $\langle x \rangle$ denote the distance from $x$ to the nearest integer, that is, $\langle x \rangle=\min \{|x-k|: k\in \Z\}.$ We say that a vector ${\bf x} \in \R^d$ is \emph{Diophantine} if there exist $\mu \ge d$ and $\epsilon>0$ such that for any ${\bf q} \in \Z^d\backslash\{\bf 0\}$ we have $\langle {\bf q}\cdot {\bf x}\rangle >\epsilon \|{\bf q}\|^{-\mu}$. From Dirichlet's approximation theorem (see e.g. \cite{Sc80}) we know that there are no Diophantine vectors  with $\mu <d$. When $d=1$, it was shown in \cite{CP10} that if $x \in \R$ is Diophantine then the exponent $e_n(x)$ grows like $\frac12n^2 \log n$. Our goal in this paper is to generalise this result for any $d \in \N$. Using the existence of exponential polynomials in $\mathcal P_n(d+1)$ with a zero of order at least $\deg \mathcal P_n -1$ we get the following.

\begin{thm}\label{thm:genlower}
For any ${\bf x} \in \R^d$ with $\{1,x_1,\dots,x_d\}$ linearly independent over $\Z$ we have
$$e_{n}({\bf x}) \ge \frac{n^{d+1}}{(d-1)!(d+1)} \log n - O(n^{d+1}),$$
where the implied constant depends on $\bf x$ and $d$ only.
\end{thm}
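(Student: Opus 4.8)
The lower bound is proved by exhibiting, for each $n$, a single polynomial $P\in\mc P_n(d+1)$ with $\|P\|_K$ small but $\|P\|_{\Delta^{d+1}}$ large; then $E_n({\bf x})\ge \|P\|_{\Delta^{d+1}}/\|P\|_K$. The natural candidate is a polynomial whose pullback to the $z$-line, i.e. the exponential polynomial $p(z):=P(e^z,e^{x_1 z},\dots,e^{x_d z})=\sum_{{\bf m}} c_{\bf m}\, e^{(m_0+m_1 x_1+\dots+m_d x_d)z}$, vanishes to very high order at $z=0$. Since $P$ has degree $n$ in $d+1$ variables, the coefficient vector $(c_{\bf m})$ has $\binom{n+d+1}{d+1}\sim n^{d+1}/(d+1)!$ free parameters, so by linear algebra we can force $p$ to have a zero of order $N-1$ at the origin, where $N=\binom{n+d+1}{d+1}$ is that dimension. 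This is the "existence of exponential polynomials with a zero of order at least $\deg\mc P_n-1$" alluded to just before the theorem.

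The key quantitative input is then a Schwarz-type lemma: if $p(z)$ is an exponential polynomial of the above shape whose frequencies $\omega_{\bf m}=m_0+m_1 x_1+\dots+m_d x_d$ all satisfy $|\omega_{\bf m}|\le C n$ (which holds since $|x_\ell|\le 1$), and $p$ vanishes to order $N-1$ at $0$, then on $|z|\le 1$ one has a lower bound of the form $\|p\|_{|z|\le 1}\ge |p^{(N-1)}(0)|\,\cdot$ (something like $1/(N-1)!$ times a factor controlling the spread of the frequencies), while $\|p\|_K=\|p\|_{|z|\le 1}$ and $\|P\|_{\Delta^{d+1}}$ is comparable to $\sup_{|z|\le 1}|p(z)|$ up to the norm-equivalence constants in \eqref{eqn:BW}. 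More precisely I would estimate $|p^{(N-1)}(0)|=|\sum_{\bf m} c_{\bf m}\,\omega_{\bf m}^{N-1}|$ from below using the nonvanishing of a generalized Vandermonde/confluent determinant in the $\omega_{\bf m}$ (these are distinct because $1,x_1,\dots,x_d$ are $\Z$-linearly independent), combined with an upper bound $|c_{\bf m}|\le \|p\|_{|z|\le 1}\cdot(\text{factorial/geometric factors})$ coming from Cauchy estimates applied to the linear system expressing the $c_{\bf m}$ in terms of the Taylor coefficients of $p$. Balancing these, the dominant term is $\log\big((N-1)!\big)\sim N\log N\sim \frac{n^{d+1}}{(d+1)!}\cdot (d+1)\log n=\frac{n^{d+1}}{d!}\log n$; after accounting for the number $N$ of frequencies and the $\binom{N+1}{2}$-type count of pairwise differences entering the Vandermonde, the net coefficient of $n^{d+1}\log n$ works out to $\frac{1}{(d-1)!(d+1)}$, with everything else absorbed into $O(n^{d+1})$.

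Concretely the steps are: (1) fix $N=\dim\mc P_n(d+1)$ and solve the homogeneous linear system forcing $p$ to vanish to order $N-1$, normalizing $\|P\|_{\Delta^{d+1}}=1$; (2) write $p(z)=\sum c_{\bf m} e^{\omega_{\bf m} z}$ and express the Taylor coefficients of $p$ at $0$ as $V\cdot(c_{\bf m})$ where $V=(\omega_{\bf m}^{j}/j!)$ is a (generalized) Vandermonde matrix; (3) invert to bound $|c_{\bf m}|$ and hence $|p^{(N-1)}(0)|=|\sum c_{\bf m}\omega_{\bf m}^{N-1}|$ from below by $1/|\det V|$ times controlled factors; (4) use the classical Vandermonde determinant formula $\det V=\prod_{{\bf m}<{\bf m}'}(\omega_{\bf m}-\omega_{\bf m}')\big/\prod j!$ together with the lower bound $|\omega_{\bf m}-\omega_{\bf m}'|\ge c\|{\bf m}-{\bf m}'\|^{-\mu}$... — wait, at this point one only needs the $\Z$-linear independence, not the Diophantine condition, so in fact I would avoid any lower bound on $|\omega_{\bf m}-\omega_{\bf m}'|$ and instead route the whole estimate through $p^{(N-1)}(0)$ directly: since $p$ vanishes to order $N-1$, Taylor's theorem gives $|p^{(N-1)}(0)|\le (N-1)!\,\|p\|_{|z|\le 1}$, so $\|p\|_K=\|p\|_{|z|\le1}\ge |p^{(N-1)}(0)|/(N-1)!$, and it remains to show $|p^{(N-1)}(0)|$ is not too small given the normalization $\|P\|_{\Delta^{d+1}}=1$ — but that is immediate since $p^{(N-1)}(0)$ is a fixed polynomial expression in the coefficients of $P$, which are bounded by $1$, so $|p^{(N-1)}(0)|\le (Cn)^{N-1}N!$, and \emph{generically} (after a suitable choice among the solutions of the vanishing system, or by a pigeonhole/volume argument on the solution space) one can keep it of size at least, say, $1$. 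Then $E_n({\bf x})\ge \|P\|_{\Delta^{d+1}}/\|p\|_K\ge (N-1)!/(Cn)^{N-1}$, and taking logs, $e_n({\bf x})\ge \log(N-1)!-(N-1)\log(Cn)=N\log N-O(N\log n)\cdot$... since $\log N=(d+1)\log n+O(1)$ and $N=\frac{n^{d+1}}{(d+1)!}+O(n^d)$, the leading term is $\frac{n^{d+1}}{(d+1)!}(d+1)\log n=\frac{n^{d+1}}{d!}\log n$; hmm, this gives $\frac{1}{d!}$ rather than $\frac{1}{(d-1)!(d+1)}$, and $\frac{1}{d!}=\frac{1}{(d-1)!\,d}>\frac{1}{(d-1)!(d+1)}$, so the crude count overshoots and one must be more careful about \emph{which} order of vanishing is actually achievable — this is the main obstacle.

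The real subtlety, and the step I expect to be hardest, is that one cannot force vanishing to order $N-1$ with a clean lower bound on $p^{(N-1)}(0)$ simultaneously; instead one works with vanishing to order roughly $N=\binom{n+d+1}{d+1}$ but must track the interaction between the combinatorial structure of the exponents ${\bf m}\in\Z_{\ge0}^{d+1}$ with $|{\bf m}|\le n$ and the arithmetic of the frequencies $\omega_{\bf m}$. The correct leading constant $\frac{1}{(d-1)!(d+1)}$ should emerge from a more refined estimate in which the effective "number of derivatives" contributing a factor of $\log n$ is not $N\sim n^{d+1}/(d+1)!$ but rather a weighted count reflecting that the $\omega_{\bf m}$ cluster, i.e. many pairs ${\bf m},{\bf m}'$ have $\omega_{\bf m}-\omega_{\bf m}'$ small; equivalently, in the Vandermonde product $\prod|\omega_{\bf m}-\omega_{\bf m}'|$ the dominant contribution to $\sum\log|\omega_{\bf m}-\omega_{\bf m}'|$ is governed by the distribution of the values $\{m_0+m_1x_1+\dots+m_dx_d\}$, whose range is $O(n)$ and which number $N$, so typical gaps are $\sim n/N\sim n^{-d}$ and $\sum_{{\bf m}<{\bf m}'}\log|\omega_{\bf m}-\omega_{\bf m}'|\approx \binom{N}{2}\log(n^{-d})=-d\binom N2\log n+O(N^2)$; feeding this into $\log\det V$ and then into $e_n({\bf x})=\log(N!)-\log|\det V|-O(N^2)$ and using $\log(N!)=N\log N-N+O(\log N)$ with $N\log N\sim \frac{d+1}{(d+1)!}n^{d+1}\log n=\frac{1}{d!}n^{d+1}\log n$ and $d\binom N2\log n\sim \frac{d}{2}\cdot\frac{n^{2(d+1)}}{((d+1)!)^2}\log n$ — which is the wrong order of magnitude, signaling that this heuristic for $\det V$ is too lossy and the argument must instead keep $\det V$ implicit, bounding $|c_{\bf m}|$ directly via Cramer's rule against a cleverly normalized right-hand side. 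I would therefore organize the proof around an explicit test polynomial — for instance a product or a Hermite-interpolation-type construction adapted to the lattice $\{{\bf m}:|{\bf m}|\le n\}$ — for which $\|P\|_{\Delta^{d+1}}$, $\|p\|_{|z|\le1}$, and the critical Taylor coefficient can all be estimated by hand, deferring to the paper's own "existence of exponential polynomials with a zero of order $\ge\deg\mc P_n-1$" for the qualitative input and supplying the quantitative count $\frac{n^{d+1}}{(d-1)!(d+1)}\log n$ by a careful summation over the monomials; the matching of the constant to $\frac{1}{(d-1)!(d+1)}$ rather than $\frac1{d!}$ is precisely where the degree-$n$ (rather than degree-bounded-total-degree) structure and the resulting count $\sum_{|{\bf m}|\le n}(\text{ord of vanishing contributed by }{\bf m})\sim \int_0^n t^{d-1}\,dt\cdot(\text{stuff})$ produces the extra factor $\frac{1}{d+1}$ from integrating $t^d$ versus $\frac1d$ from $t^{d-1}$.
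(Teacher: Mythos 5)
Your overall strategy --- produce $P\in\mc P_n(d+1)$ whose pullback $p(z)=P(e^z,e^{x_1z},\dots,e^{x_dz})$ vanishes to order $N\approx\dim\mc P_n(d+1)$ at $z=0$ --- is the right starting point and matches the paper. But the mechanism by which you try to convert that vanishing into a lower bound on $E_n(\mathbf x)$ has a genuine gap, and you never close it. You reduce everything to a lower bound on $|p^{(N-1)}(0)|=\bigl|\sum_{\bf m}c_{\bf m}\omega_{\bf m}^{N-1}\bigr|$ relative to the normalization $\|P\|_{\Delta^{d+1}}=1$, and assert that ``generically'' this can be kept of size $1$. That cannot be justified under the hypotheses of the theorem: the ratio between the first nonvanishing Taylor coefficient and $\max|c_{\bf m}|$ is governed by the generalized Vandermonde determinant in the frequencies $\omega_{\bf m}=j_0+{\bf j}\cdot{\bf x}$, i.e.\ essentially by the quantity $\beta(\ell,{\bf m})$ of \eqref{eqn:beta}, and the paper needs the full Diophantine hypothesis (Proposition~\ref{lem:beta}) to bound that from below. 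Theorem~\ref{thm:genlower} is asserted for \emph{all} $\Z$-linearly independent $\mathbf x$, including Liouville vectors, for which the frequencies come arbitrarily close together and your determinant is arbitrarily small. Your own later paragraphs (the Vandermonde heuristics, the clustering discussion) are symptoms of this: any route through lower bounds on Taylor coefficients at the origin cannot avoid arithmetic input on $\mathbf x$.

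The idea you are missing is that the vanishing order should be exploited \emph{outside} the unit disk, via the Bernstein--Walsh inequality \eqref{eqn:BW}, rather than at the origin. Normalize $f=p/\|P\|_K$ so that $\max_{|z|\le1}|f|=1$; the Schwarz lemma (maximum principle applied to $f(z)/z^N$) gives $\max_{|z|=r}|f|\ge r^N$ for $r\ge1$, while \eqref{eqn:BW} gives $\max_{|z|=r}|f|\le E_n({\bf x})e^{nC_0r}$ with $C_0=\max\{1,\|{\bf x}\|\}$. Comparing the two at $r=N/n$ yields $N\log(N/n)\le e_n({\bf x})+C_0N$; since $\log(N/n)=d\log n+O(1)$ and $N=n^{d+1}/(d+1)!+O(n^d)$, this gives exactly the constant $d/(d+1)!=1/((d-1)!(d+1))$, with no lower bound on any Taylor coefficient and no arithmetic information about $\mathbf x$. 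Incidentally, your complaint that the ``crude count overshoots'' to $1/d!$ rests on an arithmetic slip: in $\log(N-1)!-(N-1)\log(Cn)$ the subtracted term is of the same order $N\log n$ as the main term, and keeping it produces $N\log(N/n)$, i.e.\ the correct constant --- but fixing that slip still would not repair the unjustified lower bound on $|p^{(N-1)}(0)|$.
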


As for the upper estimate we get

\begin{thm}\label{thm:main}
If ${\bf x} \in [-1,1]^d$ is Diophantine then for any $n \in \N$ we have
\begin{equation}\label{eqn:main}
e_n({\bf x}) \le \frac{ n^{d+1}}{(d-1)!(d+1)}\log n+O( n^{d+1}),
\end{equation}
where the implied constant depends on $\bf x$ and $d$ only. In particular, \eqref{eqn:main} holds for a.e.  ${\bf x} \in [-1,1]^d$.
\end{thm}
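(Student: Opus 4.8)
The plan is to bound $e_n(\mathbf x)$ by controlling $|P(\mathbf z)|$ on $\Delta^{d+1}$ for a polynomial $P \in \mathcal P_n(d+1)$ with $\|P\|_K \le 1$. Pulling back by the exponential parametrization, set $f(z) = P(e^z, e^{x_1 z}, \dots, e^{x_d z})$; this is an exponential polynomial whose frequencies lie in the lattice-like set $\{ k_0 + k_1 x_1 + \cdots + k_d x_d : k_i \ge 0,\ k_0 + \cdots + k_d \le n \}$, so it has at most $N := \binom{n+d+1}{d+1} = \frac{n^{d+1}}{(d+1)!} + O(n^d)$ exponential monomials. The hypothesis $\|P\|_K \le 1$ says $|f(z)| \le 1$ for $|z| \le 1$, and we want an upper bound for $\|P\|_{\Delta^{d+1}}$, equivalently for $\sup |f|$ on a suitable disk (or on the distinguished boundary of the polydisk, reached via $z$ purely imaginary of modulus up to $\sim n$, since $|e^{cz}|$ ranges up to $e^{n}$). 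The growth of $f$ off the unit disk is governed by a Bernstein–Walsh/Turán-type estimate for exponential sums: if $f$ has $N$ frequencies and is bounded by $1$ on $|z|\le 1$, then on $|z| \le R$ one expects $|f(z)| \lesssim C^N R^{?}$, but the exponent must be extracted from how tightly the frequencies cluster near $0$ — and \emph{that} is where the Diophantine condition enters.

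The key mechanism is a \textbf{small-divisor / confluent-interpolation} argument. Order the frequencies $\lambda_1, \dots, \lambda_N$ (the distinct values $\mathbf k \cdot (1,\mathbf x)$); the Diophantine hypothesis $\langle \mathbf q \cdot \mathbf x \rangle > \epsilon \|\mathbf q\|^{-\mu}$ gives a lower bound on the spacing between distinct frequencies coming from exponent vectors of size $\le n$, namely gaps $\gtrsim \epsilon n^{-\mu}$, hence $\min_{i\ne j}|\lambda_i - \lambda_j| \ge \epsilon\, n^{-\mu}$. One then writes a divided-difference / Hermite interpolation formula for $f$ against the quasi-polynomial basis $\{z^j e^{\lambda_i z}\}$, or equivalently applies a Vandermonde-type bound: the coefficients of $f$ in this basis are recovered by an integral of $f$ against a kernel whose size is controlled by the reciprocal of a generalized Vandermonde determinant in the $\lambda_i$. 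That determinant is $\prod_{i<j}(\lambda_i - \lambda_j)$ up to multiplicities, so $\log(1/|\det|) \le N^2 \cdot \log(1/\min\text{gap}) = O(N^2 \log n) = O(n^{2(d+1)}\log n)$ — far too lossy. The correct, sharp route avoids crude Vandermonde bounds: instead one directly estimates $\sup_{|z|\le R}|f(z)|$ using the fact that $f$ vanishes to high order nowhere but is a sum of $N$ terms, via the inequality $\log\sup_{|z|\le R}|f| \le \log\sup_{|z|\le 1}|f| + (\text{number of frequencies}) \cdot \log^+ R + (\text{correction from clustering})$. Taking $R \sim n$ (to reach the polydisk) yields the main term $N \log n = \frac{n^{d+1}}{(d+1)!}\log n$ — but the theorem's constant is $\frac{1}{(d-1)!(d+1)} = \frac{d}{(d+1)!}\cdot\frac{1}{1}$... in fact $\frac{n^{d+1}}{(d-1)!(d+1)}$, which is $d$ times larger, so the effective count of relevant parameters is $\sim \frac{d\, n^{d+1}}{(d+1)!}$; the extra factor $d$ reflects that along each of the $d+1$ coordinate directions of the polydisk the growth rate is $n$, and the $z$-variable sees all of them. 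I would make this precise by separating variables: bound $\|P\|_{\Delta^{d+1}}$ by iterating a one-variable Bernstein–Walsh estimate in each $z_\ell$, at each stage restricting to $K$ via the curve, and tracking how the degree budget $n$ splits.

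The heart of the matter — and the step I expect to be the main obstacle — is proving the sharp one-dimensional estimate
$$\log \sup_{|z| \le R} |f(z)| \le \log \sup_{|z|\le 1}|f(z)| + \Big(\tfrac{n^{d+1}}{(d-1)!(d+1)} + o(n^{d+1})\Big)\log R + O(n^{d+1})$$
for exponential polynomials $f$ of the above shape, \emph{using only} the Diophantine spacing of frequencies and not any one-dimensional best-approximation machinery (which, as the authors emphasize, fails for $d\ge 2$). The tool I would reach for is a \emph{Remez–Turán inequality for exponential sums} on arcs, combined with a potential-theoretic/Jensen-formula count: write $\log|f|$ as the sum of $\log\sup_{|z|\le 1}|f|$ plus the contribution of the zeros of $f$ in the annulus $1 \le |z| \le R$ (Jensen), bound the number of such zeros by a Tijdeman-type zero-counting estimate for exponential polynomials with $N$ terms and frequencies of modulus $\le n$ (which gives $O(nR + N) = O(nR + n^{d+1})$ zeros in $|z|\le R$), and integrate. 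Plugging $R \asymp n$, the zero count is $O(n^{d+1})$, so the surviving main term $\frac{n^{d+1}}{(d-1)!(d+1)}\log n$ must come not from zeros but from the \emph{spread of frequencies itself}: $\log|f(z)| \approx (\max_i \Re(\lambda_i z))$, and $\max_i |\lambda_i| \le (d+1)\cdot 1 = O(1)$ is too small — so in fact the growth to $\Delta^{d+1}$ is not obtained by enlarging the $z$-disk but by the change of variables $z_\ell = e^{\,c_\ell z}$ with $|z_\ell|$ up to $1$ already achieved for $\Re z \le 0$; I would therefore instead bound $E_n(\mathbf x)$ by the ratio of sup-norms of $P$ on the \emph{polydisk} versus on the smaller "exponential image region," and invoke a multivariate Bernstein–Walsh inequality (Zeriahi, Siciak) where the relevant quantity is the Siciak extremal function of $K$, whose logarithm at the corner of $\Delta^{d+1}$ is computed — via an explicit equilibrium-measure / pluripotential calculation exploiting the Diophantine condition to show $K$ is not pluripolar and estimate its capacity — to equal $\frac{\log n}{(d-1)!(d+1)} \cdot n^{d} \cdot (1 + o(1))$ after normalizing by $\deg = n$. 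Reconciling these two viewpoints, and in particular pinning down where the arithmetic of $\mathbf x$ forces the precise constant $\frac{1}{(d-1)!(d+1)}$ rather than merely $O(n^{d+1}\log n)$, is the crux; I would expect the authors to handle it by an explicit inductive construction reducing the $d$-dimensional frequency count to a sum over $(d-1)$-dimensional slices, each slice contributing a one-variable estimate of Coman–Poletsky type but with the rational-approximation input replaced by the uniform spacing bound $\min\text{gap} \gtrsim n^{-\mu}$ — and the final answer is insensitive to $\mu$ precisely because the main term $n^{d+1}\log n$ dominates the $O(\mu\, n^{d+1})$ error coming from small divisors.
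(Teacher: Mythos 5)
Your proposal does not reach a proof: it surveys several possible strategies (Jensen/Tij\-de\-man zero counting, Remez--Tur\'an, Siciak extremal functions) and explicitly leaves the crux --- pinning down the constant $\tfrac{1}{(d-1)!(d+1)}$ --- unresolved. More importantly, you dismiss as ``far too lossy'' essentially the one route that works. The paper recovers each coefficient $c(\ell,\mathbf m)$ of $P$ by applying to $f(z)=P(e^z,\dots,e^{x_dz})$ the annihilating differential operator $D_R=R(d/dz)$ with $R(\lambda)=\prod_{(j_0,\mathbf j)\ne(\ell,\mathbf m)}(\lambda-(j_0+\mathbf j\cdot\mathbf x))$; evaluating at $z=0$ gives $c(\ell,\mathbf m)\,\beta(\ell,\mathbf m)$, where $\beta(\ell,\mathbf m)$ is exactly the product of frequency differences you considered. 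Your error is in estimating that product by $(\text{min gap})^{\#\text{factors}}$, i.e.\ $\log(1/|\beta|)\lesssim N^{2}\log n$. In fact $\beta$ is \emph{large}, not small: for each fixed $\mathbf j$ the factors $(\ell-j_0)+(\mathbf m-\mathbf j)\cdot\mathbf x$, $j_0=0,\dots,n-|\mathbf j|$, are distances from a fixed real number to \emph{consecutive integers}, so their product is factorial-sized, $\ge \langle(\mathbf m-\mathbf j)\cdot\mathbf x\rangle\,((n-|\mathbf j|)/2e)^{\,n-|\mathbf j|}$; only the single small divisor $\langle(\mathbf m-\mathbf j)\cdot\mathbf x\rangle$ per $\mathbf j$ needs the Diophantine hypothesis, costing merely $O(\mu\, n^{d}\log n)$ in total. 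Summing over $\mathbf j$ yields $\log|\beta(\ell,\mathbf m)|\ge \tfrac{1}{(d+1)!}n^{d+1}\log n-O(n^{d+1})$, with a \emph{plus} sign.

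With that lower bound in hand the rest is elementary and bypasses all the machinery you invoke: Cauchy's inequality on $|z|\le 1$ gives $|f^{(t)}(0)|\le t!$, the coefficient sum of $R$ is at most $(N+n)^{N}$, hence $\log|c(\ell,\mathbf m)|\le N\log(N+n)-\log|\beta(\ell,\mathbf m)|$, and $\|P\|_{\Delta^{d+1}}\le(N+1)\max|c|$ --- there is no need to ``reach the polydisk'' by enlarging the disk in $z$ (indeed your intermediate claim that $\max_i|\lambda_i|=O(1)$ is wrong; the frequencies range up to $O(n)$). The constant then falls out arithmetically: $N\log N\sim\tfrac{d+1}{(d+1)!}n^{d+1}\log n$ minus $\tfrac{1}{(d+1)!}n^{d+1}\log n$ equals $\tfrac{d}{(d+1)!}n^{d+1}\log n=\tfrac{n^{d+1}}{(d-1)!(d+1)}\log n$. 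It is not a ``factor of $d$ from the $d+1$ coordinate directions'' as you guessed, but the difference of two terms of the same order. The missing idea in your write-up is precisely this: the product of all frequency differences from a fixed frequency has a large, explicitly computable main term, and the Diophantine condition is needed only to control one factor per slice.
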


To prove their result Coman and Poletsky make use of the well developed theory of continued fractions in $\R$. As there is no good analogue of continued fractions theory in higher dimensions we will consider a different approach.

We say that a vector ${\bf x}=(x_1,\dots,x_d) \in \R^d$ with $\{x_1,\dots,x_d\}$ linearly independent over $\Q$ is \emph{Liouville} if it is not Diophantine, that is, for any $n \in \N$ there exists ${\bf q} \in \Z^d\backslash\{\bf 0\}$ such that $\langle{\bf q}\cdot {\bf x}\rangle < \|{\bf q}\|^{-n}$. Let $\mathcal L_d$ denote the set of Liouville vectors in $\R^d$. Let $W_d(\alpha)$ denote the set of vectors ${\bf x} \in \R^d$ such that there are infinitely many integer vectors ${\bf q} \in \Z^d$ satisfying $\langle {\bf q} \cdot {\bf x}\rangle < \|{\bf q}\|^{-\alpha}$. It was proved in \cite{BD86} that the Haudorff dimension of $W_d(\alpha)$ is $(d-1)+\frac{d+1}{1+\alpha}.$ Since $\mathcal L_d=\cap_{\alpha \ge d} W_d(\alpha)$, it follows that the Hausdorff dimension of $\mathcal L_d$ is at most $d-1.$ In particular, $\mathcal L_d$ has zero Lebesgue measure which justifies the last part of Theorem~\ref{thm:main}.

We note that for any nonzero ${\bf q} \in \Z^d$ the set $\{{\bf x} \in \R^d : {\bf q}\cdot {\bf x}=0 \}$ is a hyperplane in $\R^d$ and is contained in $\mathcal L_d$. Together with the above upper estimate we get that the set $\mathcal L_d$ of Liouville $d$-vectors has Hausdorff dimension $d-1$.

We now turn to discuss the exceptional set of points in $\R^d$ for which $e_n({\bf x})$ grows faster than $C n^{d+1} \log n$. To this end, we define the set
$$W(d)=\left\{{\bf x} \in [-1,1]^d : \limsup_{\|{\bf q}\| \to \infty} \frac{-\log \langle {\bf q} \cdot {\bf x}\rangle}{\|{\bf q}\|^{d+1}\log \|{\bf q}\|} =\infty \right\},$$
where ${\bf q} \in \Z^d_{\ge 0}:=\{(q_1,\dots,q_d) \in \Z^d : q_1,\dots,q_d \ge 0\}$.

\begin{thm}\label{thm:lowerfast}
For any ${\bf x}\in W(d)$, $\limsup_n \frac{e_n({\bf x})}{n^{d+1}\log n}=\infty.$ 
\end{thm}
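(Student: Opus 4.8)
The plan is to exhibit, for each $\mathbf{x}\in W(d)$ and along a suitable subsequence of degrees $n$, a polynomial $P\in\mathcal P_n(d+1)$ with $\|P\|_K\le 1$ but $\|P\|_{\Delta^{d+1}}$ enormous, thereby forcing $e_n(\mathbf x)$ to grow faster than $Cn^{d+1}\log n$ for every constant $C$. The natural candidate is a single monomial difference: since $\mathbf q\cdot\mathbf x$ is nearly an integer, say $\langle\mathbf q\cdot\mathbf x\rangle$ is tiny for some $\mathbf q=(q_1,\dots,q_d)\in\Z_{\ge0}^d$ with $\|\mathbf q\|$ large, the two monomials $z_0^{p}z_1^{q_1}\cdots z_d^{q_d}$ and $z_0^{m}$ (with $p+q_1x_1+\cdots+q_dx_d$ close to the integer $m$, and $m=p+\lfloor\mathbf q\cdot\mathbf x\rceil$) take nearly equal values on $K$: on $K$ the first is $e^{(p+\mathbf q\cdot\mathbf x)z}$ and the second is $e^{mz}$, and for $|z|\le1$ the difference is bounded by roughly $\langle\mathbf q\cdot\mathbf x\rangle$ in sup-norm. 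Thus $Q:=z_0^p z_1^{q_1}\cdots z_d^{q_d}-z_0^m$ has $\|Q\|_K\lesssim\langle\mathbf q\cdot\mathbf x\rangle$, while $\|Q\|_{\Delta^{d+1}}\ge 1$ (evaluate at a point where the two monomials have opposite sign, e.g. $z_0=z_j=1$ for the relevant indices versus a sign flip — one gets $|Q|$ at least $1$, in fact close to $2$). Normalizing, $P:=Q/\|Q\|_K$ satisfies $\|P\|_K\le1$ and $\|P\|_{\Delta^{d+1}}\gtrsim \langle\mathbf q\cdot\mathbf x\rangle^{-1}$.

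The degree of $Q$ is $n=\max\{p+|\mathbf q|_1, m\}$; choosing $p$ minimally (e.g. $p=0$ if $m\ge0$, which holds since the $q_i\ge0$ and the $x_i$ can be negative but are in $[-1,1]$, so one may need a bounded shift $p=O(\|\mathbf q\|)$ to make $m\ge 0$ — in any case $p=O(\|\mathbf q\|)$), we get $n=O(\|\mathbf q\|)$, and more precisely we can arrange $n\asymp\|\mathbf q\|$. Then from $\|P\|_{\Delta^{d+1}}\gtrsim\langle\mathbf q\cdot\mathbf x\rangle^{-1}$ we obtain
$$e_n(\mathbf x)\ge \log\|P\|_{\Delta^{d+1}} \ge -\log\langle\mathbf q\cdot\mathbf x\rangle - O(1).$$
Dividing by $n^{d+1}\log n\asymp\|\mathbf q\|^{d+1}\log\|\mathbf q\|$, the right-hand side is, up to bounded factors,
$$\frac{-\log\langle\mathbf q\cdot\mathbf x\rangle}{\|\mathbf q\|^{d+1}\log\|\mathbf q\|},$$
which by the definition of $W(d)$ has $\limsup=\infty$ over a sequence of $\mathbf q$'s with $\|\mathbf q\|\to\infty$. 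Taking the corresponding sequence of degrees $n$ yields $\limsup_n e_n(\mathbf x)/(n^{d+1}\log n)=\infty$, as claimed.

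Two technical points need care. First, one must track the comparison $n\asymp\|\mathbf q\|$ uniformly: because $\log\|P\|_K\le \log\langle\mathbf q\cdot\mathbf x\rangle + O(\|\mathbf q\|)$ rather than $+O(1)$ — the degree-$n$ exponential has derivatives up to size $n$, so the sup-norm estimate on $K$ of $e^{(m+\theta)z}-e^{mz}$ for $|\theta|=\langle\mathbf q\cdot\mathbf x\rangle$ is of order $|\theta|\,e^{|m|}\cdot(\text{polynomial in }n)$, which is $\langle\mathbf q\cdot\mathbf x\rangle\cdot e^{O(n)}$. This only costs an additive $O(n^{d+1})$ (indeed $O(n)$) in $e_n$, which is negligible against $-\log\langle\mathbf q\cdot\mathbf x\rangle$ precisely on $W(d)$ where that quantity dominates $n^{d+1}\log n$. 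Second, one must ensure the exponents are nonnegative integers and the degree is exactly controlled; this is where the restriction $\mathbf q\in\Z_{\ge0}^d$ in the definition of $W(d)$ is used — it guarantees $q_1x_1+\cdots+q_dx_d$ is within $\|\mathbf q\|$ of an integer $m$ that we can make nonnegative by adding $p=O(\|\mathbf q\|)$ copies of the $z_0$-variable. The main obstacle, and the only place real estimation is required, is getting a clean bound on $\|z_0^p z_1^{q_1}\cdots z_d^{q_d}-z_0^m\|_K$ in terms of $\langle\mathbf q\cdot\mathbf x\rangle$ and the degree $n$; everything else is bookkeeping. Since the bound we need is crude (any bound of the form $\langle\mathbf q\cdot\mathbf x\rangle\cdot e^{O(n)}$ suffices), this is routine, but it should be stated carefully so that the $O(n^{d+1})$ error is genuinely absorbed.
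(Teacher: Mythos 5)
Your proposal is correct and follows essentially the same route as the paper: the paper takes $p$ to be the nearest integer to $\mathbf q_\ell\cdot\mathbf x$, uses the two-term polynomial $z_0^{p}-\prod_j z_j^{q_j}$ (norm $2$ on the polydisk, norm $\lesssim e^{O(n)}\langle\mathbf q_\ell\cdot\mathbf x\rangle$ on $K$ via $|1-e^\xi|\le 2|\xi|$), and divides by $n^{d+1}\log n$ with $n=d\|\mathbf q_\ell\|$ exactly as you do. Your variant $z_0^{p}\prod_j z_j^{q_j}-z_0^{m}$ with a shift to keep exponents nonnegative is, if anything, slightly more careful than the paper about the sign of the nearest integer.
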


It is easy to see (e.g. from Theorem~\ref{thm:main}) that $W(d) \subset \mathcal L_d$ so that it has Hausdorff dimension at most $d-1$. In fact, we have

\begin{thm} \label{thm:dim}
Hausdorff dimension of the exceptional set $W(d)$ is $d-1$. 
\end{thm}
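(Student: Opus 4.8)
The upper bound $\dim_H W(d)\le d-1$ has already been observed above, since $W(d)\subset\mathcal L_d=\bigcap_{\alpha\ge d}W_d(\alpha)$ and $\dim_H W_d(\alpha)=(d-1)+\frac{d+1}{1+\alpha}\to d-1$ by \cite{BD86}. So the plan is to prove the matching lower bound $\dim_H W(d)\ge d-1$. The key point is that the super-fast approximation in the definition of $W(d)$ can be forced using only the last coordinate of the integer vectors ${\bf q}$, so that $W(d)$ contains a full $(d-1)$-dimensional cylinder over a single Liouville-type point; since the Hausdorff dimension of a product is at least that of either factor, this already gives dimension $d-1$, with no Cantor construction or mass distribution principle needed.

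First I would fix a real number $\xi\in(0,1)$ with
\[
\limsup_{q\to\infty,\ q\in\Z_{\ge0}}\ \frac{-\log\langle q\xi\rangle}{q^{d+1}\log q}=\infty .
\]
Such a $\xi$ comes from the standard Liouville construction: take $\xi=\sum_{k\ge1}2^{-a_k}$ with integers $a_1=2<a_2<a_3<\cdots$ to be specified, and test against $q_k=2^{a_k}$. Then $q_k\xi-\sum_{j\le k}2^{a_k-a_j}=\sum_{j>k}2^{a_k-a_j}$, where the subtracted sum is an integer and the tail satisfies $2^{a_k-a_{k+1}}\le\sum_{j>k}2^{a_k-a_j}\le 2^{a_k-a_{k+1}+1}$. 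Once $a_{k+1}-a_k\ge3$ (automatic in what follows) the right side is $<\tfrac12$, so $\langle q_k\xi\rangle=\sum_{j>k}2^{a_k-a_j}$ and $-\log\langle q_k\xi\rangle\ge(a_{k+1}-a_k-1)\log2$. Choosing $a_{k+1}=k\,2^{(d+1)a_k}a_k$ then yields
\[
\frac{-\log\langle q_k\xi\rangle}{q_k^{d+1}\log q_k}\ \ge\ \frac{(a_{k+1}-a_k-1)\log2}{2^{(d+1)a_k}\,a_k\log2}\ =\ k-\frac{a_k+1}{2^{(d+1)a_k}a_k}\ \longrightarrow\ \infty ;
\]
note that $\xi$ is then a Liouville number.

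Next I would check the inclusion $[-1,1]^{d-1}\times\{\xi\}\subset W(d)$. For any $(x_1,\dots,x_{d-1})\in[-1,1]^{d-1}$ put ${\bf x}=(x_1,\dots,x_{d-1},\xi)\in[-1,1]^d$ and ${\bf q}^{(k)}=(0,\dots,0,q_k)\in\Z^d_{\ge0}$. Then $\|{\bf q}^{(k)}\|=q_k\to\infty$ and ${\bf q}^{(k)}\cdot{\bf x}=q_k\xi$, so along this subsequence the quotient in the definition of $W(d)$ equals $\frac{-\log\langle q_k\xi\rangle}{q_k^{d+1}\log q_k}\to\infty$, whence ${\bf x}\in W(d)$. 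Therefore
\[
\dim_H W(d)\ \ge\ \dim_H\big([-1,1]^{d-1}\times\{\xi\}\big)\ =\ \dim_H[-1,1]^{d-1}\ =\ d-1 ,
\]
and together with the upper bound this gives $\dim_H W(d)=d-1$. If one wishes to keep the example inside the $\Z$-linearly independent locus used elsewhere in the paper, observe that for the fixed $\xi$ the set of $(x_1,\dots,x_{d-1})\in[-1,1]^{d-1}$ making $\{1,x_1,\dots,x_{d-1},\xi\}$ linearly dependent over $\Z$ is a countable union of affine hyperplanes, of dimension $\le d-2$; deleting it leaves a subset of $[-1,1]^{d-1}$ of dimension $d-1$ whose cylinder over $\xi$ still lies in $W(d)$.

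There is essentially no serious obstacle here: the only nontrivial step is the routine bookkeeping in the Liouville construction, namely ensuring the tail of $\xi$ is small enough that $\langle q_k\xi\rangle$ is genuinely the distance to the nearest integer and that the growth of $a_{k+1}$ outpaces the denominator $q_k^{d+1}\log q_k$. The one point worth isolating is conceptual rather than technical — the extremal approximation behaviour required for membership in $W(d)$ can be concentrated in a single coordinate direction — and it is exactly this that makes the slicing argument reach the full dimension $d-1$.
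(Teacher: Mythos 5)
Your proof is correct, but it reaches the lower bound $\dim_H W(d)\ge d-1$ by a genuinely different and more elementary route than the paper. The paper invokes the ubiquitous-systems machinery of Dodson--Rynne--Vickers: it takes the family of resonant hyperplanes $R(\mathbf q)=\{\mathbf x:\mathbf q\cdot\mathbf x\in\Z\}$, notes the (trivial) ubiquity with $\rho\equiv 1$, applies \cite[Theorem 1]{DRV90} to conclude that the limsup set $\Lambda(\mathcal R;\psi)$ with $\psi(n)=n^{-n^{d+2}}$ has dimension at least $d-1$, and then checks $\Lambda(\mathcal R;\psi)\subset W(d)$. You instead exhibit an explicit $(d-1)$-dimensional subset of $W(d)$: a cylinder $[-1,1]^{d-1}\times\{\xi\}$ over a single super-Liouville number $\xi$, tested only against the vectors $(0,\dots,0,2^{a_k})$. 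Your Liouville construction is sound (the tail estimates, the choice $a_{k+1}=k\,2^{(d+1)a_k}a_k$, and the resulting divergence of $-\log\langle q_k\xi\rangle/(q_k^{d+1}\log q_k)$ all check out), the slice is isometric to $[-1,1]^{d-1}$ so has dimension $d-1$, and monotonicity finishes the argument. What each approach buys: yours is self-contained, avoids any external dimension theorem, and gives the stronger structural statement that $W(d)$ contains a full affine $(d-1)$-slice (minus a countable union of $(d-2)$-dimensional hyperplanes if one insists on $\Z$-linear independence, as you correctly note); the paper's route is the standard one in metric Diophantine approximation and would continue to work in situations where the extremal approximation cannot be concentrated in a single coordinate direction. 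One cosmetic remark: for the upper bound you lean, as the paper does, on $W(d)\subset\mathcal L_d$; strictly, $W(d)$ may also contain points with $\Z$-linearly dependent entries, but these lie in a countable union of hyperplanes of dimension $d-1$, so the bound $\dim_H W(d)\le d-1$ is unaffected.
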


It was proved in \cite{CP10} that when $d=1$ the set of points ${x}$ for which $e_n(x)$ grow faster than $\frac12n^2\log n$ is uncountable. For $d >1$, since the Hausdorff dimension of $W(d)$ is positive we in particular get that $W(d)$ is uncountable. Thus, for any $d \in \N$ the set of points ${\bf x}$ for which $e_n({\bf x})$ grow faster than $\frac1{(d-1)!(d+1)} n^{d+1}\log n$ is uncountable and has Hausdorff dimension $d-1$.

In the next section we will prove Theorem~\ref{thm:main} and in \S~\ref{sec:lower} we obtain Theorem~\ref{thm:genlower}, Theorem~\ref{thm:lowerfast}, and Theorem~\ref{thm:dim}.

\subsection*{Acknowledgement}
The authors are grateful to Dan Coman for useful comments in the preliminary version of the paper.

\section{Upper estimate}
In this section our goal is to obtain Theorem~\ref{thm:main}. We state \cite[Lemma 2.4]{CP10}

\begin{lem}\label{lem:dist} Let $x,y \in \Z$ with $x \le y$ be given. For any $\alpha \in \R$ we have
\[ \prod_{j = x}^y |j-\alpha| \ge \langle \alpha \rangle
      \left(\frac{y-x}{2e}\right)^{y-x} . 
      \]\end{lem}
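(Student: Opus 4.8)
The plan is to bound each factor $|j-\alpha|$ from below by an explicit function of the distance from $j$ to the integer nearest $\alpha$, and then to compare the resulting product of half-integers against $\bigl((y-x)/(2e)\bigr)^{y-x}$ by means of two elementary convexity estimates. Throughout write $m=y-x$, and let $p\in\Z$ be an integer nearest to $\alpha$, so that $|p-\alpha|=\langle\alpha\rangle\le\tfrac12$. I would split into two cases according to whether $p\in\{x,\dots,y\}$ or not; the case $\alpha\in\Z$ (both sides vanishing) and $x=y$ are trivial and checked directly.

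\emph{Case $p\notin\{x,\dots,y\}$.} Say $p<x$, the case $p>y$ being symmetric after the substitution $\alpha\mapsto-\alpha$, $(x,y)\mapsto(-y,-x)$. Then $\alpha\le p+\tfrac12\le x-\tfrac12$, so for every $j$ in the range $|j-\alpha|=j-\alpha\ge (j-x)+\tfrac12$. Hence
$\prod_{j=x}^{y}|j-\alpha|\ge\prod_{i=0}^{m}\bigl(i+\tfrac12\bigr)\ge\tfrac12\,m!\ge\tfrac12\,(m/e)^m\ge\langle\alpha\rangle\,\bigl(m/(2e)\bigr)^m$, using $n!\ge(n/e)^n$ and $\langle\alpha\rangle\le\tfrac12$.

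\emph{Case $p\in\{x,\dots,y\}$.} Here I would isolate the factor $j=p$, which equals $\langle\alpha\rangle$, and for $j\neq p$ use $|j-\alpha|\ge|j-p|-\langle\alpha\rangle\ge|j-p|-\tfrac12$. With $a=p-x\ge0$ and $b=y-p\ge0$ (so $a+b=m$), the indices $j<p$ contribute $|j-p|\in\{1,\dots,a\}$ and those $j>p$ contribute $|j-p|\in\{1,\dots,b\}$, so $\prod_{j\neq p}|j-\alpha|\ge\prod_{i=1}^{a}(i-\tfrac12)\prod_{i=1}^{b}(i-\tfrac12)$. Two auxiliary inequalities then finish the proof: (i) $\prod_{i=1}^{n}(i-\tfrac12)\ge(n/e)^n$, and (ii) $a^ab^b\ge\bigl((a+b)/2\bigr)^{a+b}$. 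For (i) I would take logarithms and compare $\sum_{i=1}^{n}\log(i-\tfrac12)$ with $\int_0^n\log t\,dt=n\log n-n$, using that for a concave function $f(c+\tfrac12)\ge\int_c^{c+1}f$ (the tangent at the midpoint lies above $f$) on each interval $[i-1,i]$ with $i\ge2$, together with $-\log 2\ge-1=\int_0^1\log t\,dt$ for the first interval. For (ii) I would invoke convexity of $t\mapsto t\log t$, which gives $a\log a+b\log b\ge m\log(m/2)$. Combining, $\prod_{j=x}^{y}|j-\alpha|\ge\langle\alpha\rangle\,(a/e)^a(b/e)^b=\langle\alpha\rangle\,a^ab^b/e^{m}\ge\langle\alpha\rangle\,(m/2)^m/e^{m}=\langle\alpha\rangle\bigl((y-x)/(2e)\bigr)^{y-x}$.

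The per-factor bounds are immediate from the triangle inequality; the only real work — and the step at which the constant $\tfrac1{2e}$ is actually determined — is matching the products of half-integers to the target, i.e. establishing (i) and (ii) with constants sharp enough that nothing is lost in the factorization $m=a+b$. That the minimum of $a^ab^b$ over $a+b=m$ occurs at the balanced split $a=b$, rather than at an extreme, is what makes the bound survive; it is the one point where a little care is needed.
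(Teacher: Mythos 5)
Your proof is correct. Note that the paper itself gives no argument here --- it simply quotes this as Lemma 2.4 of [CP10] --- so there is no in-paper proof to compare against; what you have written is a complete, self-contained substitute. The two places that need care both check out: in the case $p\in\{x,\dots,y\}$ the per-factor bound $|j-\alpha|\ge|j-p|-\tfrac12$ keeps every factor at least $\tfrac12>0$, your inequality $\prod_{i=1}^{n}(i-\tfrac12)\ge(n/e)^n$ follows exactly as you say from the midpoint-tangent estimate $\log(i-\tfrac12)\ge\int_{i-1}^{i}\log t\,dt$ for $i\ge2$ together with $-\log 2\ge-1$ for $i=1$, and the convexity bound $a^ab^b\ge\bigl(m/2\bigr)^m$ absorbs the worst-case split of the range around $p$; the degenerate cases $x=y$, $a=0$ or $b=0$, and $\alpha\in\Z$ all pass. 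The only cosmetic remark is that in the case $p\notin\{x,\dots,y\}$ you give away a factor $2^{m-1}$, which is harmless since only $\langle\alpha\rangle\le\tfrac12$ is needed there.
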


Let ${\bf x} \in \R^d$ and $n\in \N$ be given. For any $ \ell \in \{0,1,\dots,n\}$ and ${\bf m} \in \Z^d$ with $m_1,\dots,m_d \in \{0,1,\dots,n\}$ we define
\begin{equation}\label{eqn:beta}
\beta(\ell,{\bf m})= \prod_{j_0+j_1+\cdots+j_d \le n, (j_0,{\bf j})\ne (\ell,{\bf  m})} ((\ell- j_0)+({\bf m-j})\cdot {\bf x}),
\end{equation}
where each ${\bf j}=(j_1,\dots,j_d) \in \Z^d$ has nonnegative components.
We will need the following estimate.

\begin{prop}\label{lem:beta}
If ${\bf x}$ is Diophantine, then there exists a constant $C_{{\bf x},d}>0$ such that 
$$\log |\beta(\ell,{\bf m})| \ge \frac{1}{(d+1)!}n^{d+1}\log n- C_{{\bf x},d} n^{d+1}.$$
\end{prop}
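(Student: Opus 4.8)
The plan is to split the product defining $\beta(\ell,{\bf m})$ into factors grouped by the value of $k := j_0 - \ell$, so that for each fixed choice of ${\bf j} = (j_1,\dots,j_d)$ the inner product over $j_0$ becomes a product of the form $\prod_{j_0} |(j_0 - \ell) + ({\bf j} - {\bf m})\cdot{\bf x}|$, to which Lemma~2.2 applies with $\alpha = ({\bf m} - {\bf j})\cdot{\bf x}$ shifted appropriately. More precisely, fix ${\bf j}$ with nonnegative components and $|{\bf j}|_1 := j_1 + \dots + j_d \le n$; then $j_0$ ranges over $\{0,1,\dots,n - |{\bf j}|_1\}$ (minus the single excluded index when ${\bf j} = {\bf m}$), and the corresponding partial product is $\prod_{j_0} |(j_0 - \ell) + ({\bf m}-{\bf j})\cdot{\bf x}|$. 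Writing this as $\prod_{t} |t - \alpha|$ over an interval of integers $t$ of length $\approx n - |{\bf j}|_1$ with $\alpha = \ell - ({\bf m}-{\bf j})\cdot{\bf x}$, Lemma~2.2 gives a lower bound $\langle \alpha \rangle \cdot \big(\frac{n-|{\bf j}|_1}{2e}\big)^{n-|{\bf j}|_1}$, where $\langle \alpha \rangle = \langle ({\bf m}-{\bf j})\cdot{\bf x}\rangle$ since $\ell \in \Z$.

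The next step is to control the two resulting pieces after taking logarithms and summing over all admissible ${\bf j}$. The "main term" is $\sum_{{\bf j}} (n - |{\bf j}|_1)\log\!\big(\frac{n-|{\bf j}|_1}{2e}\big)$; since the number of ${\bf j} \in \Z^d_{\ge 0}$ with $|{\bf j}|_1 = s$ is $\binom{s+d-1}{d-1} \sim \frac{s^{d-1}}{(d-1)!}$, this sum is asymptotic to $\sum_{s=0}^{n} \frac{s^{d-1}}{(d-1)!}(n-s)\log(n-s)$, and a standard Euler–Maclaurin / Riemann-sum estimate (substituting $s = nu$) evaluates the leading term as $\frac{n^{d+1}\log n}{(d-1)!}\int_0^1 u^{d-1}(1-u)\,du + O(n^{d+1}) = \frac{n^{d+1}\log n}{(d-1)!}\cdot\frac{1}{d(d+1)} + O(n^{d+1}) = \frac{n^{d+1}}{(d+1)!}\log n + O(n^{d+1})$, which is exactly the claimed main term. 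The $\frac{1}{2e}$ factor and the sub-leading parts of $(n-s)\log\frac{n-s}{2e}$ only contribute $O(n^{d+1})$.

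The error term — and the main obstacle — is the sum $\sum_{{\bf j}} \log \langle ({\bf m}-{\bf j})\cdot{\bf x}\rangle$, which is negative and must be shown to be $O(n^{d+1})$; this is precisely where the Diophantine hypothesis is used. For each ${\bf j} \ne {\bf m}$ we have ${\bf m} - {\bf j} \in \Z^d\setminus\{{\bf 0}\}$ with $\|{\bf m}-{\bf j}\| \le n$, so the Diophantine condition gives $\langle ({\bf m}-{\bf j})\cdot{\bf x}\rangle > \epsilon\|{\bf m}-{\bf j}\|^{-\mu} \ge \epsilon n^{-\mu}$, hence $-\log\langle ({\bf m}-{\bf j})\cdot{\bf x}\rangle < \mu\log n + \log(1/\epsilon)$ for each of the $O(n^d)$ admissible ${\bf j}$, contributing a total of $O(n^d \log n) = O(n^{d+1})$. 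One also discards the single excluded index $(\ell,{\bf m})$ and checks that the "missing" factor, being a single term of modulus at most $O(n)$, changes the log by only $O(\log n)$. Assembling these bounds and absorbing everything not of the form $\frac{n^{d+1}}{(d+1)!}\log n$ into $-C_{{\bf x},d}\,n^{d+1}$ yields the proposition; the constant depends on ${\bf x}$ through $\epsilon$ and $\mu$, and on $d$ through the combinatorics.
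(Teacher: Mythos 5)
Your proposal is correct and follows essentially the same route as the paper: group the product over $j_0$ for each fixed ${\bf j}$, apply Lemma~\ref{lem:dist} to get the factor $\langle({\bf m}-{\bf j})\cdot{\bf x}\rangle\,\bigl(\tfrac{n-|{\bf j}|}{2e}\bigr)^{n-|{\bf j}|}$, control the $\langle\cdot\rangle$ terms by the Diophantine hypothesis (giving $O(n^d\log n)$ total), and evaluate the main sum $\sum_k\binom{n-k+d-1}{d-1}k\log k$ asymptotically as $\tfrac{n^{d+1}}{(d+1)!}\log n+O(n^{d+1})$. The only cosmetic difference is that you evaluate that sum by a Riemann-sum substitution while the paper does an explicit iterated integration by parts (Lemma~\ref{lem:integralest}) plus a sum-versus-integral comparison (Lemma~\ref{lem:locmax}); the conclusions agree.
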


To obtain the proposition we need the following lemmas. We set $|{\bf j}|=j_1+\cdots+j_d.$ Arguing inductively on $d$ it is easy to see that

\begin{lem}\label{lem:md1}
For any $m \in \N$, the set $\left\{{\bf j} \in \Z^d : |{\bf j}|=m, j_1,\dots,j_d \ge 0\right\}$ has cardinality 
$C(m+d-1,d-1)=\left( \begin{array}{c}  m+d-1\\d-1  \end{array} \right).$
\end{lem}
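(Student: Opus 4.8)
The plan is to argue by induction on $d$, as the text suggests. For the base case $d=1$, the set $\{j_1\in\Z : j_1=m,\ j_1\ge 0\}$ is the singleton $\{m\}$, so its cardinality equals $1=\binom{m+0}{0}$, which is the asserted value $\binom{m+d-1}{d-1}$ for $d=1$.

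For the inductive step, assume the formula holds with $d-1$ in place of $d$, for every value of $m$. Given a nonnegative integer vector ${\bf j}=(j_1,\dots,j_d)$ with $|{\bf j}|=m$, classify it according to the value $k=j_d\in\{0,1,\dots,m\}$. For each fixed $k$, the truncated vector $(j_1,\dots,j_{d-1})$ ranges over all nonnegative integer $(d-1)$-tuples with $j_1+\cdots+j_{d-1}=m-k$, and by the inductive hypothesis there are exactly $\binom{(m-k)+(d-1)-1}{(d-1)-1}=\binom{m-k+d-2}{d-2}$ of them. Summing over $k$ gives
$$\#\left\{{\bf j}\in\Z^d : |{\bf j}|=m,\ j_1,\dots,j_d\ge 0\right\}=\sum_{k=0}^{m}\binom{m-k+d-2}{d-2}=\sum_{i=0}^{m}\binom{i+d-2}{d-2}.$$
It then remains to evaluate this sum, which is the hockey-stick identity $\sum_{i=0}^{m}\binom{i+r}{r}=\binom{m+r+1}{r+1}$ specialized to $r=d-2$; this yields $\binom{m+d-1}{d-1}$ and closes the induction. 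The hockey-stick identity is itself immediate by a one-line induction on $m$ using Pascal's rule $\binom{a}{b}+\binom{a}{b+1}=\binom{a+1}{b+1}$.

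Since every step is elementary counting, I do not expect a genuine obstacle here — the only point requiring a small computation is the hockey-stick summation, which is standard. If one prefers to avoid the double induction, the same count follows at once from the classical "stars and bars" bijection: a tuple ${\bf j}$ corresponds to the arrangement of $m$ indistinguishable stars and $d-1$ bars in a row, and choosing the $d-1$ positions of the bars among the $m+d-1$ slots gives directly $\binom{m+d-1}{d-1}$.
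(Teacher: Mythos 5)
Your proof is correct and follows exactly the route the paper indicates (the paper merely says ``arguing inductively on $d$'' and omits the details): induction on $d$ via conditioning on the last coordinate, closed by the hockey-stick identity. The stars-and-bars remark is a valid, even cleaner, alternative, but both arguments are standard and the verification is complete.
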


\begin{lem}\label{lem:integralest}
We have
$$\int_1^n (n-x)^{d-1}x \log x \,dx \ge \frac{1}{d(d+1)} n^{d+1}\log n - C_d n^{d+1}.$$
\end{lem}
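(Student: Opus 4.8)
The plan is to pass from the integral over $[1,n]$ to the integral over $[0,n]$, and then isolate the main term by the exact splitting $\log x = \log n + \log(x/n)$ rather than by any crude truncation (which would spoil the constant). First observe that $(n-x)^{d-1}x \ge 0$ on $[0,n]$, while $\log x \le 0$ on $(0,1)$ and the integrand $(n-x)^{d-1}x\log x$ is integrable near $0$ (it tends to $0$). Hence
\[
\int_1^n (n-x)^{d-1}x\log x\, dx \;\ge\; \int_0^n (n-x)^{d-1}x\log x\, dx,
\]
so it suffices to bound the right-hand side from below.

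Next I record the elementary identity
\[
\int_0^n (n-x)^{d-1}x\, dx \;=\; \frac{n^{d+1}}{d(d+1)},
\]
which follows from the substitution $u=n-x$ and integrating $u^{d-1}(n-u)$ term by term. Writing $\log x = \log n + \log(x/n)$ and substituting $x=ns$ in the leftover piece gives
\[
\int_0^n (n-x)^{d-1}x\log x\, dx \;=\; \frac{n^{d+1}}{d(d+1)}\log n \;+\; n^{d+1}\int_0^1 (1-s)^{d-1}s\log s\, ds.
\]

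Finally, the integral $\int_0^1 (1-s)^{d-1}s\log s\, ds$ is a finite constant depending only on $d$ — it is the Beta-type value $\tfrac{1}{d(d+1)}\bigl(\psi(2)-\psi(d+2)\bigr)$, but all that is needed is its finiteness. Since $\log s\le 0$ on $(0,1)$ it is nonpositive, and crudely $|(1-s)^{d-1}s\log s|\le s|\log s|$ with $\int_0^1 s|\log s|\,ds=\tfrac14$, so it lies in $[-\tfrac14,0]$. Taking $C_d$ to absorb this constant yields
\[
\int_1^n (n-x)^{d-1}x\log x\, dx \;\ge\; \frac{n^{d+1}}{d(d+1)}\log n - C_d\, n^{d+1},
\]
as claimed. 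There is no real obstacle; the one point worth flagging is that the sharp leading constant $\tfrac{1}{d(d+1)}$ is not obtainable by discarding the portion of $[1,n]$ on which $\log x$ is appreciably smaller than $\log n$ (that costs a multiplicative factor), whereas the exact identity $\log x=\log n+\log(x/n)$ costs only an additive $O(n^{d+1})$.
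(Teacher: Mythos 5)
Your proof is correct, and it takes a genuinely different route from the paper's. The paper establishes the recursive inequality $\int_1^n (n-x)^m x^\ell \log x\, dx \ge \frac{m}{\ell+1}\bigl[\int_1^n (n-x)^{m-1}x^{\ell+1}\log x\, dx - \frac{n^{m+\ell+1}}{\ell+1}\bigr]$ via integration by parts and iterates it $d-1$ times, trading one factor of $(n-x)$ for one power of $x$ at each step until it reaches $\frac{1}{d}\int_1^n x^d\log x\,dx$, accumulating the error terms into $C_d n^{d+1}$. You instead extend the integral to $[0,n]$ (legitimate, since the integrand is nonpositive on $(0,1)$), split $\log x=\log n+\log(x/n)$, and rescale to reduce everything to the Beta integrals $\int_0^1(1-s)^{d-1}s\,ds=\frac{1}{d(d+1)}$ and $\int_0^1(1-s)^{d-1}s\log s\,ds\in[-\frac14,0]$. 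Your route is shorter, avoids the iteration bookkeeping, and in fact yields an exact identity $\int_0^n(n-x)^{d-1}x\log x\,dx=\frac{n^{d+1}}{d(d+1)}\log n+c_d n^{d+1}$ with an explicit $c_d$, so you get the matching upper bound for free (the discarded piece over $[0,1]$ is only $O(n^{d-1})$); the paper's iteration, by contrast, only ever produces the one-sided lower bound that the application requires. Both arguments deliver the same sharp leading constant $\frac{1}{d(d+1)}$.
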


\begin{proof}
We claim for any $m,\ell \ge 1$ that 
$$\int_1^n (n-x)^m x^\ell \log x\, dx\ge \frac{m}{\ell+1}\left[ \int_1^n (n-x)^{m-1}x^{\ell+1}\log x\,dx-\frac{n^{m+\ell+1}}{\ell+1} \right].$$
We first note from integration by parts that 
$$\int x^\ell \log x\,dx=\frac{x^{\ell+1}}{\ell+1}\log x-\int \frac{x^{\ell}}{\ell+1} dx=\frac{x^{\ell+1}}{\ell+1}\log x-\frac{x^{\ell+1}}{(\ell+1)^2}+C.$$
Now, using integration by parts again we obtain:
\begin{multline*}
\int_1^n (n-x)^m x^\ell \log x\, dx=(n-x)^m \left(\frac{x^{\ell+1}}{\ell+1}\log x-\frac{x^{\ell+1}}{(\ell+1)^2}\right)\big\vert_1^n\\
+\int_1^n m(n-x)^{m-1} \left(\frac{x^{\ell+1}}{\ell+1}\log x-\frac{x^{\ell+1}}{(\ell+1)^2}\right)\,dx.
\end{multline*}
We note that $(n-x)^{m-1}x^{\ell+1} \le n^{m+\ell}$ for $x \in [1,n]$. Thus, simplifying we get
\begin{align*}
\int_1^n (n-x)^m x^\ell \log x\, dx&\ge  \frac{(n-1)^m}{(\ell+1)^2}+\frac{m}{\ell+1}\int_1^n \left[(n-x)^{m-1} x^{\ell+1}\log x -\frac{n^{m+\ell}}{\ell+1}\right]\,dx\\
&\ge \frac{m}{\ell+1}\left[ \int_1^n (n-x)^{m-1}x^{\ell+1}\log x\,dx-\frac{n^{m+\ell+1}}{\ell+1} \right].
\end{align*}
 To prove the lemma we iterate the claim:
\begin{align*}
\int_1^n (n-x)^{d-1}x \log x \,dx &\ge \frac{d-1}{2}\left[ \int_1^n (n-x)^{d-2}x^{2}\log x\,dx-\frac{ n^{d+1}}{2}\right]\\
& \ge \frac{d-1}{2} \left[ \frac{d-2}{3}\left(\int_1^n (n-x)^{d-3}x^{3}\log x\,dx-\frac{n^{d+1}}3 \right)-\frac{n^{d+1}}2\right]\\
&\ge \cdots\\
&\ge \frac{(d-1)!}{d!} \int_1^n x^d \log x\,dx - C_d' n^{d+1}\\
&= \frac{1}{d(d+1)} n^{d+1}\log n - C_d n^{d+1}. \qedhere
\end{align*}
\end{proof}

We state without proof the following 
\begin{lem}\label{lem:locmax}
Let $m <n$ be integers and $f:[m,n] \to [0,\infty)$ be a continuous function with exactly one local maximum in $[m,n]$ and $f(m)=f(n)=0$. Then, we have
\begin{equation}
 \left|\sum_{k=m}^n f(k) -\int_m^n f(x) \,dx\right| \le \max_{m\le x \le n} f(x).
\end{equation}
\end{lem}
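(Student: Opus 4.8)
The plan is to exploit that $f$, being continuous, nonnegative, vanishing at both endpoints, and having a single local maximum, is \emph{unimodal}: an interior local minimum would force a second local maximum, so there is a maximizer $c\in[m,n]$ with $M:=f(c)=\max_{[m,n]}f$ such that $f$ is non-decreasing on $[m,c]$ and non-increasing on $[c,n]$. Writing $S=\sum_{k=m}^n f(k)$ and $I=\int_m^n f(x)\,dx$, I would decompose $I=\sum_{k=m}^{n-1}\int_k^{k+1}f(x)\,dx$ and compare each unit piece to the sample values $f(k),f(k+1)$ using monotonicity: on any subinterval $[k,k+1]$ lying in the increasing region one has $f(k)\le\int_k^{k+1}f\le f(k+1)$, and on any subinterval in the decreasing region $f(k+1)\le\int_k^{k+1}f\le f(k)$. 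Only the unique unit interval $[p,p+1]$ containing $c$, with $p=\lfloor c\rfloor$, fails to be monotone and must be treated separately; if $c$ is itself an integer the argument below only simplifies.

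To bound $I-S$ from above I would use, on the increasing intervals $m\le k\le p-1$, the estimate $\int_k^{k+1}f\le f(k+1)$; on the decreasing intervals $p+1\le k\le n-1$, the estimate $\int_k^{k+1}f\le f(k)$; and on the peak interval the crude bound $\int_p^{p+1}f\le M$, valid since $0\le f\le M$ throughout. Summing, the two telescoping blocks reassemble exactly into $\sum_{k=m+1}^{n-1}f(k)$, which equals $S$ because $f(m)=f(n)=0$, so that $I\le S+M$.

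For the reverse inequality $S-I\le M$ I would instead use $\int_k^{k+1}f\ge f(k)$ on the increasing intervals and $\int_k^{k+1}f\ge f(k+1)$ on the decreasing ones; these monotone contributions reassemble into $S$ with precisely the two peak samples $f(p)$ and $f(p+1)$ omitted. For the peak interval, monotonicity gives $f\ge f(p)$ on $[p,c]$ and $f\ge f(p+1)$ on $[c,p+1]$, whence $\int_p^{p+1}f\ge f(p)(c-p)+f(p+1)(p+1-c)$. Combining, the defect is at most $f(p)\bigl(1-(c-p)\bigr)+f(p+1)\bigl(1-(p+1-c)\bigr)=f(p)(p+1-c)+f(p+1)(c-p)$, a convex combination of $f(p)$ and $f(p+1)$ since the weights are nonnegative and sum to $1$, and hence at most $M$. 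Thus $S-I\le M$ as well, and the two bounds together yield $|S-I|\le M$.

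The main obstacle, and the only place requiring care, is exactly the peak interval, where $f$ is not monotone and the clean two-sided Riemann estimates break down; the point is that the single unit interval around the maximum contributes an error of at most $M$ to each side, which is precisely the slack the statement allows. One should also check the degenerate configurations (the maximizer falling on an integer, or $n=m+1$) separately, but these collapse to the monotone estimates or to the bare inequality $I\le M$ above.
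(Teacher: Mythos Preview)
Your argument is correct. The paper actually states this lemma \emph{without proof} (``We state without proof the following''), so there is nothing to compare against; your unimodality-plus-endpoint-rule decomposition is the natural elementary argument and handles the single non-monotone unit interval cleanly, with the convex-combination bound on the defect being the right observation for $S-I\le M$.
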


\begin{proof}[Proof of Proposition~\ref{lem:beta}]

We have
$$|\beta(\ell,{\bf m})|\ge \prod_{|{\bf j}| \le n,\, {\bf j}\ne {\bf  m}} \prod_{j_0=0}^{n-|{\bf j}|}|(\ell- j_0)+({\bf m-j})\cdot {\bf x}|.$$
Since ${\bf x}$ is Diophantine of order $\mu$ we may find some $\epsilon>0$ such that $\langle {\bf q}\cdot {\bf x}\rangle \ge \epsilon\|{\bf q}\|^{-\mu}$. Using Lemma~\ref{lem:dist}  we get
\begin{align*}
|\beta(\ell,{\bf m})|&\ge \prod_{|{\bf j}|\le n,\, {\bf j}\ne {\bf  m}} \prod_{j=-\ell}^{n-|{\bf j}|-\ell}|j-({\bf m-j})\cdot {\bf x}|\\
& \ge \prod_{|{\bf j}|\le n,\, {\bf j}\ne {\bf  m}} \left( \frac{n-|{\bf j}|}{2e}\right)^{n-|{\bf j}|}\langle ({\bf m-j})\cdot {\bf x}\rangle \\
& \ge \prod_{|{\bf j}|\le n,\, {\bf j}\ne {\bf  m}} \left( \frac{n-|{\bf j}|}{2e}\right)^{n-|{\bf j}|} \prod_{|{\bf j}|\le n,\, {\bf j}\ne {\bf  m}}\epsilon \|{\bf m-j}\|^{-\mu}\\
& =\left(\prod_{k=1}^n \prod_{|{\bf j}|=n-k,\, {\bf j}\ne {\bf  m}} \left( \frac{k}{2e}\right)^{k} \right)\left(\prod_{|{\bf j}|\le n,\, {\bf j}\ne {\bf  m}} \epsilon \|{\bf m-j}\|^{-\mu}\right).
\end{align*}
We set 
$$A:=\prod_{k=1}^n \prod_{|{\bf j}|=n-k,\, {\bf j}\ne {\bf  m}} k^{k}, B:=\prod_{k=1}^n \prod_{|{\bf j}|=n-k,\, {\bf j}\ne {\bf  m}} (2e)^{-k}, C:=\prod_{|{\bf j}|\le n,\, {\bf j}\ne {\bf  m}} \epsilon \|{\bf m-j}\|^{-\mu}.$$
We now estimate each of $A,B,C$ separately.
Since the set $\{{\bf j} \in \Z^{d}: |{\bf j}| \le n\}$ has cardinality at most $(n+1)^d$ and $\|{\bf m-j}\| \le n$ for any $|{\bf j}| \le n$ we get that 
$$C=\prod_{|{\bf j}|\le n,\, {\bf j}\ne {\bf  m}}\epsilon \|{\bf m-j}\|^{-\mu}\ge \prod_{|{\bf j}|\le n} \epsilon n^{-\mu}\ge \epsilon^{(n+1)^d} n^{-\mu (n+1)^d} \ge  \epsilon^{(2n)^d} n^{-\mu (2n)^d}.$$
Thus,
\begin{equation}\label{eqn:stir}
\log C \ge -\mu 2^d n^d \log n+2^d n^d \log \epsilon.
\end{equation}
Using Lemma~\ref{lem:md1} together with the trivial bound we get
\begin{align*}
\log A &\ge \left(\sum_{k=1}^n \sum_{|{\bf j}|=n-k}k\log k\right) - n\log n\\&=\left(\sum_{k=1}^n\left( \begin{array}{c}  n-k+d-1\\d-1  \end{array} \right) k\log k\right) - n\log n\\
& \ge \left(\frac{1}{(d-1)!}\sum_{k=1}^n(n-k)^{d-1} k \log k\right) - n \log n.
\end{align*}
It is easy to see that the function $f:[1,n] \to [0,\infty)$ given by $f(x)=(n-x)^{d-1}x \log x$ satisfies Lemma~\ref{lem:locmax}. Thus, Lemma~\ref{lem:integralest} and Lemma~\ref{lem:locmax} give 
\begin{align*}
\log A &\ge  \frac{1}{(d-1)!}\left(\int_{1}^n(n-x)^{d-1} x \log x \, dx - \max_{1 \le x \le n}f(x)\right) - n \log n\\
& \ge   \frac{1}{(d-1)!} \left(  \frac{1}{d(d+1)} n^{d+1}\log n - C_d n^{d+1}-n^d \log n\right) -n\log n.
\end{align*}
Hence,
\begin{equation}\label{eqn:A}
\log A\ge \frac{1}{(d+1)!} n^{d+1}\log n - 3C_d n^{d+1}.
\end{equation}
On the other hand, since $C(n-k+d-1,d-1) \le \frac{n^{d-1}}{(d-1)!}+O(n^{d-2})$ for any $k \in [1,n]$ we get
\begin{multline}\label{eqn:B}
\log B \ge -\sum_{k=1}^n \sum_{|{\bf j}|=n-k}k\log (2e) =-\sum_{k=1}^n\left( \begin{array}{c}  n-k+d-1\\d-1  \end{array} \right) k\log (2e) \\
 \ge -\frac{2}{(d-1)!}n^{d+1} - O(n^d),
\end{multline}
where the implied constant depends $d$ only. Thus, combining \eqref{eqn:stir}, \eqref{eqn:A} and \eqref{eqn:B} we arrive at
\begin{equation*}
\log |\beta(\ell,{\bf m})| >  \frac{1}{(d+1)!} n^{d+1}\log n - C_{d,\mu,\epsilon} n^{d+1}. \qedhere
\end{equation*}
\end{proof}

\begin{proof}[Proof of Theorem~\ref{thm:main}]
Let $N=\dim\mc P_n -1$, so that $N=\left( {\begin{array}{c} n+d+1 \\ n  \end{array} } \right)-1.$

Fix some $P \in \mc P_n$ with $\|P\|_K \le 1.$ Define
$$P({\bf z})=\sum_{j_0+j_1+\cdots+j_d \le n} c(j_0,{\bf j}) z_0^{j_0}\cdots z_d^{j_d} \text{ and } f(z)=P(e^z,e^{x_1 z},\dots, e^{x_d z}),$$
where $j_0,\dots,j_d \ge 0.$ Then,
 $$f(z)=\sum_{j_0+j_1+\cdots+j_d \le n} c(j_0,{\bf j}) e^{(j_0+{\bf j}\cdot {\bf x})z}.$$
For any polynomial $R(\lambda)=\sum_{j=0}^m c_j \lambda^j$ we introduce the differential operator
$$D_R=R\left(\frac{d}{dz}\right)=\sum_{j=0}^m c_j\frac{d^j}{dz^j}.$$
We note that for any $a \in \C$ we have
\begin{equation}\label{eqn:oper}
D_R(e^{az}) \mid_{z=0}=\sum_{j=0}^mc_j a^j = R(a).
\end{equation}
To estimate $c(\ell,{\bf m})$ we set
$$R_{\ell,{\bf m}}(\lambda)=\prod_{j_0+j_1+\cdots+j_d \le n, (j_0,{\bf j})\ne (\ell,{\bf  m})} (\lambda- (j_0+{\bf j}\cdot {\bf x}))=\sum_{t=0}^Na_t\lambda^t.$$
For any $\lambda \ge 0 $ we have
$$\sum_{t=0}^N|a_t|\lambda^t \le \prod_{j_0+j_1+\cdots+j_d \le n, (j_0,{\bf j})\ne (\ell,{\bf  m})} (\lambda+ |j_0+{\bf j}\cdot {\bf x}|) \le (\lambda+n)^N.$$
From \eqref{eqn:oper} we note that 
$$D_{R_{\ell,{\bf m}}}(e^{(j_0+{\bf j}\cdot {\bf x})z})\mid_{z=0}=\begin{cases} 
      R_{\ell,{\bf m}}(\ell+{\bf m}\cdot {\bf x}) & \textrm{ if $(j_0,{\bf j})= (\ell,{\bf  m})$}, \\
      0 & \textrm{ if $(j_0,{\bf j})\ne (\ell,{\bf  m})$}. \\
   \end{cases} $$
Thus,
$$D_{R_{\ell,{\bf m}}}(f(z))\mid_{z=0}=c(\ell,{\bf m})\beta(\ell,{\bf m})$$
where $\beta(\ell,{\bf m})$ from \eqref{eqn:beta}.

On the other hand, using $\|P\|_K \le 1$ and Cauchy's inequality we get
\begin{equation}
|f^{(t)}(0)| \le t! \le N^t \text{ whenever }t \le N.
\end{equation}
This implies that
$$\left|D_{R_{\ell,{\bf m}}}(f(z))\mid_{z=0}\right|=\left|\sum_{t=0}^N a_t f^{(t)}(0)\right| \le \sum_{t=0}^N |a_t|N^t \le (N+n)^N.$$
Therefore,
$$\log(|c(\ell,{\bf m})\beta(\ell,{\bf m})|) \le N\log(N+n).$$
Using Proposition~\ref{lem:beta} we obtain
\begin{multline*}
\log(|c(\ell,{\bf m})|) \le N\log(N+n) - \log |\beta(\ell,{\bf m})|\\
 \le  N\log(N+n) - \frac1{(d+1)!}n^{d+1}\log n + C_{{\bf x},d} n^{d+1}.
\end{multline*}
Since $\|P\|_{\Delta^d} \le \sum |c(j_0,{\bf j})| \le (N+1)\max  |c(j_0,{\bf j})|$ we deduce that
\begin{align*}
e_n({\bf x}) &\le  N\log(N+n) - \frac1{(d+1)!}n^{d+1}\log n + C_{{\bf x},d} n^{d+1}+\log (N+1).
\end{align*}
Finally, using
\begin{equation}\label{eqn:N}
N =C(n+d+1,d+1)-1= \frac{n^{d+1}}{(d+1)!}+O(n^d)
\end{equation}
we obtain $N\log(N+n) \le N\log N +N =\frac{1}{d!}n^{d+1}\log n+O(n^{d+1})$. Hence,
$$e_n({\bf x}) \le \frac{n^{d+1}}{(d-1)!(d+1)} \log n+O( n^{d+1}). \qedhere$$
\end{proof}

\section{Lower estimate and Hausdorff dimension}\label{sec:lower}
We first start proving Theorem~\ref{thm:genlower}. It is essentially contained in the proof of \cite[Proposition~1.3]{CP03} as pointed out by D.~Coman and for completeness we recall it here.

\begin{proof}[Proof of Theorem~\ref{thm:genlower}]
Fix $P \in \mathcal P_n(d+1)$ with $ ord(P(e^z,e^{x_1 z},\dots,e^{x_d z}),0) \ge N$. We have $P \not \equiv 0$ implies $P(e^z,e^{x_1 z},\dots,e^{x_d z}) \not \equiv 0$. We let $f(z)=\frac{1}{\|P\|_K} P(e^z,e^{x_1 z},\dots,e^{x_d z})$ so that $\|f\|_{\Delta^{d+1}}=1$ then $\max_{|z|=r}|f(z)| \ge r^N, r \ge 1.$ From \eqref{eqn:BW} we get for any $|z|=r$
$$r^N \le  E_n({\bf x})\exp(n\log^+\max\{|e^z|,|e^{x_1 z}|,\dots,|e^{x_d z}|\}) \le E_n({\bf x})e^{nC_0 r},$$
where $C_0=\max\{1,\|{\bf x}\|\}$. Taking $r=N/n$ we see that
$$N\log \frac{N}n \le e_n({\bf x})+C_0 N.$$ 
Using \eqref{eqn:N} we have
$$N\log \frac{N}n = \frac{n^{d+1}}{(d-1)!(d+1)}\log n+O(n^d\log n),$$
which gives
$$e_n({\bf x}) \ge \frac{n^{d+1}}{(d-1)!(d+1)}\log n -O(n^{d+1}).\qedhere$$
\end{proof}

Now we prove Thoerem~\ref{thm:lowerfast} which provides us with the exceptional set of points ${\bf x}$ that does not satisfy Theorem~\ref{thm:main}.

\begin{proof}[Proof of Theorem~\ref{thm:lowerfast}]
Let ${\bf x} \in W(d)$ and $({\bf q}_\ell)_{\ge 1}$ be a sequence satisfying 
\begin{equation}\label{eqn:qell}
C(\ell)=\frac{-\log \langle {\bf q}_\ell \cdot {\bf x}\rangle}{\|\bf q_\ell\|^{d+1}\log \|{\bf q_\ell}\|} \to \infty \text{ as } \ell \to \infty.
\end{equation}
For a given $\ell \ge 0$ we let $n = d\|{\bf q}_\ell\|  $ and $p \in \Z$ be such that $\langle {\bf q_\ell}\cdot{\bf x} \rangle = |{\bf q_\ell}\cdot{\bf x} -p |$. Since $\|{\bf x}\| \le 1$ we see that $|p| \le d \|{\bf q}_\ell\|$. Then, the polynomial $P$ given by
$$P(z_0,z_1,\dots,z_d)=z_0^p-\prod_{\ell=1}^d z_\ell^{q_\ell}$$
is in $\mathcal P_n(d+1)$.
Clearly, $\|P\|_{\Delta^{d+1}}=2$. Using $|1-e^\xi| \le 2 |\xi|$ for $|\xi| \le 1$  we get
$$
|P(e^z,e^{x_1 z},\dots,e^{x_d z})|=|e^{pz}(1-e^{({\bf q}_\ell\cdot {\bf x}-p)z})| \le 2 e^{n}\langle {\bf q}_\ell \cdot {\bf x}\rangle,
$$
whenever $|z| \le 1$. Therefore, 
$$E_{n}({\bf x}) \ge \|P\|_{\Delta^{d+1}} / \|P\|_K \ge   e^{-n}\frac1{\langle {\bf q}_\ell \cdot {\bf x}\rangle} .$$
So, using \eqref{eqn:qell} we get 
\begin{align*}
e_n({\bf x})=\log E_{n}({\bf x}) &\ge C(\ell)\|{\bf q}_\ell\|^{d+1} \log \|{\bf q}_{\ell}\|-n\\
&= C(\ell)\left(\frac{n}{d}\right)^{d+1} \log \frac{n}{d} -n.
\end{align*}
Thus,
$$\frac{e_n({\bf x})}{n^{d+1}\log n}  \ge \frac{1}{d^{d+1}}C(\ell)-\frac1n \to \infty \text{ as } \ell \to \infty.\qedhere$$
\end{proof}

It remains to give the proof of Theorem~\ref{thm:dim}.

\begin{proof}[Proof of Theorem~\ref{thm:dim}]

We will use ubiquitous systems introduced in \cite{DRV90} as a method of computing Hausdorff dimension of lim-sup sets. We consider the family $\mathcal R=\{R({\bf q}): {\bf q} \in \Z^d_{\ge 0} \}$ where for any ${\bf q} \in \Z^d$ we set $R({\bf q}):=\{{\bf x} \in \R^d : {\bf q}\cdot {\bf x} \in \Z\}.$
Let $\psi: \N \to [0,1]$ be a decreasing function converging to $0$ at the infinity. Define
$$\Lambda(\mathcal R; \psi)=\left\{{\bf x} \in [-1,1]^d : \dist({\bf x}, R({\bf q}))<\psi(\|{\bf q}\|) \text{ for infinitely many } R({\bf q})\right\},$$
 where $\dist({\bf x},S)=\inf_{{\bf y} \in S} \|{\bf x}-{\bf y}\|$. 
For any such $\psi$, we will prove that the Hausdorff dimension of $\Lambda(\mathcal R; \psi)$ is at least $d-1$. Then, for $\psi(n)=n^{-n^{d+2}}$ we will show that $\Lambda(\mathcal R; \psi) \subset W(d)$ which will finish the proof.

Let $I^d$ denote the hypercube $[-\frac12,\frac12]^d$ of unit length. It is well-known (see e.g.\cite{Do93}) that the family $\{R({\bf q}): {\bf q} \in \Z^d \}$ is \emph{ubiquitous with respect to $\rho(Q):=dQ^{-1-d}\log Q$} in the sense that
$$\left| I^d\backslash \bigcup_{1 \le \|{\bf q}\| \le N} B(R({\bf q});\delta(N)) \right| \to 0 \text{ as } N \to \infty,$$
 $B(R({\bf q});\delta)=\{{\bf x} \in \R^d : \dist({\bf x},R({\bf q}))<\delta\}.$
However, it is not clear if the family $\mathcal R=\{R({\bf q}): {\bf q} \in \Z^d_{\ge 0} \}$ is ubiquitous with respect to the same $\rho$. However, for our purposes we do not need to try to optimize $\rho$. Simply consider the constant function $\rho \equiv 1$, then for ${\bf q}=(0,\dots,0,1)$ we have $I^d \subset B(R({\bf q});1)$ so that $\mathcal R$ is ubiquitous w.r.t 1.  Since
$\gamma:=\limsup_{Q \to \infty} \frac{\log \rho(Q)}{\log \psi(Q)}=0,$
it follows from \cite[Theorem 1]{DRV90} that the Hausdorff dimension of $\Lambda(\mathcal R; \psi)$ is at least $\dim \mathcal R + \gamma\, {\rm codim  } \,\mathcal R=d-1$.

We now claim that $\Lambda(\mathcal R; \psi) \subset W(d)$ when $\psi(n)=n^{-n^{d+2}}$. For ${\bf x} \in \Lambda(\mathcal R; \psi)$ let $({\bf q}_\ell)_{\ge \ell}$ denote the sequence such that $ \dist({\bf x}, R({\bf q}_\ell))<\psi(\|{\bf q}\|)$ and all $R({\bf q}_\ell)$ are distinct. Then, for any ${\bf q} \in \Z^d$ we have ${\bf q} \cdot {\bf x} \not \in \Z$ which means $\{1,x_1,x_2,\dots,x_d\}$ is linearly independent over $\Z$. Let ${\bf y} \in R({\bf q}_\ell)$ be such that $ \|{\bf x}-{\bf y}\|<\psi(\|{\bf q}\|)$. We choose $p \in \Z$ with ${\bf q}_\ell\cdot {\bf y}=p$. Then,
$$\langle {\bf q_\ell} \cdot {\bf x} \rangle \le \| {\bf q_\ell} \cdot ({\bf x}-{\bf y})+ {\bf q_\ell} \cdot {\bf y}-p\| \le \| {\bf q_\ell}\|\| {\bf x}-{\bf y}\|< \psi(\|{\bf q}_\ell\|).$$
 Hence, ${\bf x} \in W(d)$ as $\frac{-\log \langle {\bf q}_\ell \cdot {\bf x}\rangle}{\|{\bf q}_\ell\|^{d+1}\log \|{\bf q}_\ell\|} \ge \|{\bf q}_\ell\|$ and $ \|{\bf q}_\ell\| \to \infty$ with $\ell \to \infty$.
\end{proof}


\begin{thebibliography}{100}




\bibitem[CP10]{CP10}
D.~Coman and E.~Poletsky, \emph{Polynomial Estimates, Exponential Curves and
Diophantine Approximation.} Math. Res. Lett. {\bf 17 (6)}, (2010), 1125--1136.

\bibitem[CP03]{CP03}
\bysame \emph{Bernstein-Walsh inequalities and the exponential curve
in $\C^2$.} Proc. Amer. Math. Soc. {\bf 131} (2003), 879--887. 

\bibitem[BD86]{BD86}
J.~D.~Bovey and M.~M.~Dodson, \emph{The Hausdorff dimension of systems of linear forms.} Acta Arith. {\bf 45} (1986), 337--358.


\bibitem[Do93]{Do93}
M.~M.~Dodson, \emph{Geometric and probabilistic ideas in metric Diophantine approximation.} Russ. Math. Surv. {\bf 48} (1993), 73--102.

\bibitem[DRV90]{DRV90}
M. M. Dodson, B. P. Rynne, and J. A. G. Vickers \emph{Diophantine approximation and a lower bound for Hausdorff dimension.} Mathematika {\bf 37} (1990), 59--73.

\bibitem[Sc80]{Sc80} W.M.~Schmidt, \emph{Diophantine approximation,} Lecture Notes in Mathematics, {\bf 785}, Springer-Verlag, Berlin, 1980.

\end{thebibliography}
\end{document}